\newtheorem{theorem}{Theorem}[section]
\newtheorem{lemma}[theorem]{Lemma}
\newtheorem{proposition}[theorem]{Proposition }
\theoremstyle{remark}
\newtheorem{remark}[theorem]{Remark}
\numberwithin{equation}{section}
\newcommand{\be}{{\boldsymbol{e}}}
\newcommand{\cC}{\mathcal{C}}
\newcommand{\EE}{\mathbb{E}}
\newcommand{\ev}{\mathrm{ev}}
\newcommand{\Mbar}{\overline{\mathcal{M}}}
\newcommand{\Mfr}{\mathfrak{M}}
\newcommand{\cO}{\mathcal{O}}
\newcommand{\PP}{\mathbb{P}}
\newcommand{\cV}{\mathcal{V}}
\DeclareMathOperator{\vdim}{vdim}
\newcommand{\vir}{\text{\rm vir}}
\newcommand{\ZZ}{\mathbb{Z}}
\begin{document}

\title[The Quantum Lefschetz Hyperplane Principle can fail] 
{The Quantum Lefschetz Hyperplane Principle can fail for positive orbifold
  hypersurfaces}

\author[Coates]{Tom Coates}
\address{Department of Mathematics\\
Imperial College London\\
180 Queen's Gate\\
London SW7 2AZ 
\\UK}
\email{t.coates@imperial.ac.uk}


\author[Gholampour]{Amin Gholampour}
\address{Department of Mathematics\\
University of Maryland\\
1301 Mathematics Building\\
College Park, MD 20742-4015\\
USA}
\email{amingh@math.umd.edu}

\author[Iritani]{Hiroshi Iritani}
\address{Department of Mathematics\\
Graduate School of Science\\
Kyoto University\\
Oiwake-cho\\
Kitashirakawa\\
Sakyo-ku\\
Kyoto, 606-8502\\
Japan}
\email{iritani@math.kyoto-u.ac.jp}

\author[Jiang]{Yunfeng Jiang}
\address{Department of Mathematics\\
Imperial College London\\
180 Queen's Gate\\
London SW7 2AZ 
\\UK}
\email{y.jiang@imperial.ac.uk}

\author[Johnson]{Paul Johnson}
\address{Mathematics Department\\
Columbia University\\
Room 509, MC 4406\\
2990 Broadway\\
New York, NY 10027\\
USA} 
\email{paul.da.johnson@gmail.com}

\author[Manolache]{Cristina Manolache}
\address{Institut f\"ur Mathematik\\ 
Humboldt Universit\"at\\
Berlin\\
Germany}
\email{manolach@mathematik.hu-berlin.de}

\thanks{The authors thank Huai-Liang Chang, Alessio Corti, Behrang Noohi, and Michael
  Rose for useful conversations. This research is supported by a Royal
  Society University Research Fellowship (TC), European Research
  Council Starting Grant 240123-GWT, an NSERC Postdoctoral Fellowship
  (AG), and an NSF Postdoctoral Fellowship DMS-0902754 (PJ)}
  
\subjclass[2010]{Primary 14N35; Secondary 14A20}

\keywords{Gromov--Witten invariants, orbifolds, quantum cohomology,
  hypersurfaces, complete intersections, Quantum Lefschetz Hyperplane
  Theorem}

\date{}

\begin{abstract}
  We show that the Quantum Lefschetz Hyperplane Principle can fail for
  certain orbifold hypersurfaces and complete intersections. It can
  fail even for orbifold hypersurfaces defined by a section of an
  ample line bundle.
\end{abstract}

\maketitle

\section{Introduction}

Let $X$ be a projective algebraic variety.  Let $g$ and $n$ be
non-negative integers, $d$ be an element of $H_2(X;\ZZ)$, and
$X_{g,n,d}$ be the moduli stack of degree-$d$ stable maps to $X$ from
genus-$g$ curves with $n$ marked points \cite{Kontsevich--Manin}.
Gromov--Witten invariants of $X$ are intersection numbers in
$X_{g,n,d}$ against the virtual fundamental cycle $[X_{g,n,d}]^\vir$
\citelist{\cite{Behrend--Fantechi} \cite{Li--Tian}}.  Let $Y \subset
X$ be a complete intersection cut out by a section of a vector bundle
$E \to X$ which is the direct sum of line bundles $E = \oplus E_j$.
The inclusion $i:Y \to X$ induces a morphism of moduli stacks
$\iota:Y_{g,n,\delta} \to X_{g,n,i_\star \delta}$.  Suppose that the line bundles $E_j$
each satisfy the positivity condition:
\begin{equation*}
  \tag{$\ast$}
  \text{$c_1(E_j) \cdot d \geq 0$ whenever $d$ is the degree
    of genus-zero stable map to $X$}
\end{equation*}
Then:
\[
\tag{$\dagger$}
\sum_{\delta : i_\star \delta = d}
\iota_\star [Y_{0,n,\delta}]^\vir = [X_{0,n,d}]^\vir \cap \be(E_{0,n,d})
\]
where $\be$ is the Euler class and $E_{0,n,d}$ is a certain vector
bundle on $X_{0,n,d}$, described in \S\ref{sec:convexity} below.
Equality ($\dagger$) lies at the heart of the Quantum Lefschetz
Hyperplane Principle, and hence of the proof of mirror symmetry for
toric complete intersections \citelist{\cite{Givental:equivariant}
  \cite{Givental:toric} \cite{LLY:1} \cite{LLY:2} \cite{LLY:3}}.  (See
\cite{Cox--Katz--Lee} for a very clear explanation of this.)

In this paper we show by means of examples that, for orbifold complete
intersections, ($\ast$) does not imply ($\dagger$).  We give examples
of smooth orbifolds $X$ and complete intersections $Y \subset X$ cut
out by sections of vector bundles $E = \oplus E_j \to X$ such that
each $E_j$ is a line bundle that satisfies ($\ast$) but there is no
cohomology class $e$ on $X_{0,n,d}$ with:
\[
\sum_{\delta : i_\star \delta = d}
\iota_\star [Y_{0,n,\delta}]^\vir = [X_{0,n,d}]^\vir \cap e
\]
In particular there is no vector bundle $E_{0,n,d}$ on $X_{0,n,d}$
such that ($\dagger$) holds.  Thus the Quantum Lefschetz Hyperplane
Principle, as currently understood, can fail for positive orbifold
complete intersections.

\begin{remark}
  There does not seem to be a universally-accepted definition of
  ampleness for line bundles on orbifolds (see
  \cite{Ross--Thomas}*{\S2.5} for one possibility) but any reasonable
  definition will imply property ($\ast$).
\end{remark}

\section{Genus-one Gromov--Witten invariants of the quintic 3-fold}

It is well-known that there is no straightforward analog of the
Quantum Lefschetz Hyperplane Principle for higher-genus stable maps
(those with $g>0$), even when both $X$ and $Y$ are smooth
varieties. Givental gave an example that demonstrates this
\cite{Givental:elliptic}; for the convenience of the reader we repeat
his argument here.  Recall that for any smooth projective variety $X$,
the moduli stack $X_{1,1,0}$ of degree-zero one-pointed stable maps
from genus-one curves to $X$ is isomorphic to $X \times \Mbar_{1,1}$.
Let $\pi_1:X_{1,1,0} \to X$ denote projection to the first factor,
$\pi_2:X_{1,1,0} \to \Mbar_{1,1}$ denote projection to the second
factor, and $\psi_1 \in H^2(\Mbar_{1,1})$ denote the universal
cotangent line class.  The virtual fundamental class is:
\[
\big[X_{1,1,0}\big]^\vir = \big[X_{1,1,0}\big] \cap
\Big(
\pi_1^\star (c_{D}(TX)) - \pi_1^\star (c_
{D-1}(TX)) \cup \pi_2^\star (\psi_1)
\Big)
\]
where $D$ is the complex dimension of $X$.  

Now let $X = \PP^4$ and $Y \subset X$ be a quintic threefold, i.e.~$Y$
is the hypersurface cut out by a generic section of $E=\cO(5) \to X$.
As before, let $i:Y \to X$ be the inclusion and $\iota:Y_{1,1,0} \to
X_{1,1,0}$ be the induced map of moduli stacks.  We will show that
there is no cohomology class $e$ on $X_{1,1,0}$ such that:
\[
\iota_\star [Y_{1,1,0}]^\vir = [X_{1,1,0}]^\vir \cap e
\]
Since both $[Y_{1,1,0}]^\vir$ and $[X_{1,1,0}]^\vir$ have the same
dimension, this amounts to showing that $\iota_\star [Y_{1,1,0}]^\vir$
is not a scalar multiple of $[X_{1,1,0}]^\vir$.

Let $h \in H^2(X)$ denote the first Chern class of the line bundle
$\cO(1)$ on $X$.  Applying the total Chern class to both sides of the
equality:
\[
TY \oplus i^\star \cO(5) = i^\star TX
\]
yields $c_1(TY) = 0$, $c_2(TY) = 10 i^\star (h^2)$, $c_3(TY) =
-40i^\star (h^3)$.  Thus:
\begin{align*}
  \iota_\star \big[Y_{1,1,0}\big]^\vir &= \iota_\star \big( {-40 }
  i^\star (h^3) - 10 i^\star (h^2)   \psi_1 \big)
 \\ &= \big( {-40 } h^3 - 10 h^2   \psi_1 \big) \cup \iota_\star 1 
 \\ &= -200 h^4 - 50 h^3 \psi_1
\end{align*}
where in the second line we used the projection formula and in the
last line we used the fact that the normal bundle to the inclusion $\iota$ is
$\pi_1^\star \cO(5)$.  On the other hand:
\begin{align*}
  \big[X_{1,1,0}\big]^\vir &= 5h^4 - 10h^3 \psi_1
\end{align*}
and so $\iota_\star [Y_{1,1,0}]^\vir$ is not a scalar multiple of
$[X_{1,1,0}]^\vir$.

\section{A trivial example}

Let $X$ be the orbifold $\PP(1,1,2,2)$, and let $Y = \PP(1,2,2)$ be
the orbifold hypersurface in $X$ defined by the vanishing of a section
of $\cO(1)$.  Let $X_{0,\vec{4},0}$ and $Y_{0,\vec{4},0}$
denote\footnote{The vector $\vec{4}$ in the subscript here is to
  emphasize the fact that we specify not only the number of marked
  points on the curves but also the isotropy group at each marked
  point.} the moduli stacks of genus-zero degree-zero stable maps to
(respectively) $X$ and $Y$, from orbicurves with four marked points
such that the isotropy group at each marked point is $\mu_2$.  As
before, write $i:Y \to X$ for the inclusion map, and
$\iota:Y_{0,\vec{4},0} \to X_{0,\vec{4},0}$ for the induced morphism
of moduli stacks.  We have $\vdim X_{0,\vec{4},0} = 0$ and $\vdim
Y_{0,\vec{4},0} = 1$, so for dimensional reasons there is no
cohomology class $e$ on $X_{0,\vec{4},0}$ such that:
\[
\iota_\star \big[Y_{0,\vec{4},0}\big]^\vir = \big[X_{0,\vec{4},0}\big]^\vir \cap
e
\]


\section{A non-trivial example}

Let $X$ be the orbifold $\PP(1,1,1,2,2,2,2)$, and let $Y =
\PP(1,1,2,2,2)$ be the orbifold complete intersection in $X$ defined
by the vanishing of a section of $\cO(1)\oplus \cO(2)$.  Let
$X_{0,\vec{4},0}$ and $Y_{0,\vec{4},0}$ denote the moduli stacks of
genus-zero degree-zero stable maps to (respectively) $X$ and $Y$, from
orbicurves with four marked points such that the isotropy group at
each marked point is $\mu_2$.  Let $i:Y \to X$ be the inclusion map
and $\iota:Y_{0,\vec{4},0} \to X_{0,\vec{4},0}$ be the induced
morphism of moduli stacks.  We have:
\begin{align*}
  & \vdim X_{0,\vec{4},0} = 1 & \vdim Y_{0,\vec{4},0} = 1
  \intertext{and the coarse moduli spaces are:}
  & |X_{0,\vec{4},0}| = \PP^3 \times \Mbar_{0,4} 
  & |Y_{0,\vec{4},0}| = \PP^2 \times \Mbar_{0,4} 
\end{align*}
where $\Mbar_{0,4}$ is Deligne--Mumford space.  Recall that the
rational homology and cohomology groups of a smooth stack coincide
with the rational homology and cohomology groups of the coarse moduli
space \cite{AGV}*{\S2}.  We therefore regard all virtual fundamental
classes, cohomology classes, Chern classes, etc.~in our calculation as
living on the coarse moduli spaces of the stacks involved.

\begin{proposition}
  \label{pro:modulistacks}
  We have:
  \begin{align*}
    & X_{0,\vec{4},0} = \PP(2,2,2,2) \times \Mbar_{0,4} 
    & Y_{0,\vec{4},0} = \PP(2,2,2) \times \Mbar_{0,4}
  \end{align*}
\end{proposition}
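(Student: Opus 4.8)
The plan is to fibre $X_{0,\vec4,0}$ over $\Mbar_{0,4}$ by stabilising the coarse domain curve, and then to show that all the remaining data is rigid. A genus-zero degree-zero stable map to $X$ contracts the coarse domain curve, so that curve is a stable $4$-pointed genus-zero curve and hence defines a point of $\Mbar_{0,4}$; this gives a morphism $X_{0,\vec4,0}\to\Mbar_{0,4}$, and likewise $Y_{0,\vec4,0}\to\Mbar_{0,4}$. The key point is that the twisted domain curve carries no nontrivial orbifold structure at its node when the coarse curve is nodal. Indeed, suppose the node had isotropy $\mu_r$. Since the map is constant onto a point $q$ whose isotropy group is $\mu_2$ (the largest isotropy occurring in $X$, resp.\ in $Y$), representability forces $r\mid 2$. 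If $r=2$, then on the component of the coarse curve carrying two of the marked points we have a genus-zero component with three special points, at each of which the monodromy of the map is the nontrivial element of $\mu_2$ (at the marked points because they carry $\mu_2$ isotropy and the map is representable, at the node by the same reasoning); but the product of the three monodromies around a genus-zero component must be trivial in $\mu_2$, and $g\cdot g\cdot g=g\neq 1$. Hence $r=1$, the node is a scheme node, and over each point of $\Mbar_{0,4}$ the twisted domain is the canonical $4$-pointed genus-zero twisted curve with $\mu_2$ isotropy at the marked points lying over the given coarse curve.

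Next I would analyse the map on a fixed such twisted curve $\mathcal{C}$. Being degree zero, it is constant on coarse spaces and factors through the residual gerbe $B\mu_2\hookrightarrow X$ at a point $q$ lying in the $\mu_2$-isotropy locus of $X$. For $X=\PP(1,1,1,2,2,2,2)$ this locus is $\{x_0=x_1=x_2=0\}=\PP(2,2,2,2)$, and for $Y=\PP(1,1,2,2,2)$ it is $\{x_0=x_1=0\}=\PP(2,2,2)$. Representability forces the induced homomorphisms $\mu_2=\mathrm{Aut}(p_i)\to\mathrm{Aut}(q)=\mu_2$ at the marked points to be isomorphisms, which in particular pins down the twisted sector. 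The only remaining datum is the underlying $\mu_2$-torsor on $\mathcal{C}$, which must have nontrivial monodromy at the four marked points and trivial monodromy at the node; since the coarse curve is $\PP^1$ (smooth or nodal) and $H^1$ of $\PP^1$ with $\ZZ/2$ coefficients vanishes, the monodromy representation, and hence the torsor, is unique. Therefore the fibre of $X_{0,\vec4,0}\to\Mbar_{0,4}$ over $[\mathcal{C}]$ is exactly $\PP(2,2,2,2)$, with its $\mu_2$ automorphisms matched by the gerbe structure, and globalising yields $X_{0,\vec4,0}=\PP(2,2,2,2)\times\Mbar_{0,4}$. The argument for $Y$ is identical.

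The step I expect to be the main obstacle is not the bijection on points — that is a routine unwinding of the definition of twisted stable maps in the degree-zero, genus-zero setting — but making the identification hold on the nose at the level of stacks. One must check that the generic $\mu_2$-automorphisms of the twisted domain curve are exactly accounted for by the $\mu_2$-gerbe structure of $\PP(2,2,2,2)$ over $\PP^3$, so that no spurious gerbe over $\PP^3\times\Mbar_{0,4}$ appears, and that the orbifold structure of $X_{0,\vec4,0}$ along the boundary of $\Mbar_{0,4}$ really is the product structure; the monodromy argument above is what rules out the alternative of an orbifold node there.
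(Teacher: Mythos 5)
Your route is genuinely different from the paper's: you construct the isomorphism by hand, fibring over $\Mbar_{0,4}$, whereas the paper observes that $X_{0,\vec{4},0}$ is a $\mu_2$-banded gerbe over $|X_{0,\vec{4},0}| = \PP^3\times\Mbar_{0,4}$, hence classified by $H^2(\PP^3\times\Mbar_{0,4},\mu_2)\cong\mu_2\times\mu_2$, and pins down the class by restricting to a fibre of each of the two projections. Your preliminary steps are fine: the monodromy argument ruling out an orbifold node is correct, as are the identification of the image locus with $\PP(2,2,2,2)$ (resp.\ $\PP(2,2,2)$) and of each fibre of $X_{0,\vec{4},0}\to\Mbar_{0,4}$ with $\PP(2,2,2,2)$.

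The gap is in ``globalising''. Knowing that every fibre over $\Mbar_{0,4}$ is isomorphic to $\PP(2,2,2,2)$ determines only one of the two $\mu_2$-factors of the classifying group: the gerbe with class $({-1},{-1})$ --- that is, $\PP(2,2,2,2)$ twisted by the nontrivial $\mu_2$-gerbe over $\Mbar_{0,4}\cong\PP^1$, e.g.\ the stack of square roots of $\cO(1)$ --- also restricts to $\PP(2,2,2,2)$ over every point of $\Mbar_{0,4}$, and has the same coarse space and the same $\mu_2$ automorphism group at every point. So your criterion that ``the generic $\mu_2$-automorphisms are exactly accounted for by the gerbe structure of $\PP(2,2,2,2)$'' cannot distinguish the two candidates, and your diagnosis that the only residual worry is an orbifold node over the boundary of $\Mbar_{0,4}$ is off target. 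The appeal to $H^1(\PP^1,\ZZ/2)=0$ shows the $\mu_2$-torsor on the domain is unique up to isomorphism for each fixed curve, but its $\mu_2$ worth of automorphisms is precisely what makes the relative moduli problem a gerbe in the $\Mbar_{0,4}$-direction, and the obstruction to trivialising that gerbe in families lives in $H^2(\Mbar_{0,4},\mu_2)\cong\ZZ/2$, not in $H^1$. What is missing is the second restriction computed in the paper: the restriction of $X_{0,\vec{4},0}$ to $\{\mathrm{pt}\}\times\Mbar_{0,4}$ is $(B\mu_2)_{0,\vec{4},0}$, and one must show that this is the \emph{trivial} gerbe over $\Mbar_{0,4}$ in order to conclude that the class is $({-1},1)$ and hence that the product decomposition holds.
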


\begin{proof}
  We prove the proposition only for $X_{0,\vec{4},0}$; the argument
  for $Y_{0,\vec{4},0}$ is almost identical.  The moduli stack
  $X_{0,\vec{4},0}$ is a $\mu_2$-gerbe over the coarse moduli space
  $|X_{0,\vec{4},0}|$.  Such gerbes necessarily have trivial
  lien\footnote{The lien of a gerbe is also known as its band.
    For a careful discussion of bands and the classification of
    gerbes, see \cite{Moerdijk}*{Lecture~3}.}, and
  thus are classified by the sheaf cohomology group:
  \begin{align*}
    H^2(|X_{0,\vec{4},0}|,\mu_2)  & \cong 
    H^2(\PP^3,\mu_2) \times
    H^2(\Mbar_{0,4},\mu_2) \\
    & \cong \mu_2 \times \mu_2
  \end{align*}
  The gerbe $\PP(2,2,2,2) \times \Mbar_{0,4}$ over $\PP^3 \times
  \Mbar_{0,4}$ is non-trivial on the first factor and trivial on the
  second factor, and therefore corresponds to the class $({-1},1) \in
  \mu_2 \times \mu_2$.  It thus suffices to show that the gerbe
  $X_{0,\vec{4},0}$ over $\PP^3 \times \Mbar_{0,4}$ also corresponds
  to the class $({-1},1) \in \mu_2 \times \mu_2$.  

  Let $\pi_1$ and $\pi_2$ denote the projections to (respectively) the
  first and second factors of the product $\PP^3 \times \Mbar_{0,4}$.
  There is a commutative diagram:
  \begin{equation}
    \xymatrix{
      \PP(2,2,2,2) \ar[d] & X_{0,\vec{4},0}  \ar[l]_-{\Phi}
      \ar[d] \\ 
      \PP^3 & \PP^3 \times \Mbar_{0,4} 
      \ar[l]_-{\pi_1}
    }
  \end{equation}
  where each vertical arrow is the canonical map from a stack to its
  coarse moduli space, and $\Phi$ is the natural morphism coming from
  the fact that $X_{0,\vec{4},0}$ is a moduli stack of degree-zero
  maps.  This implies that restricting the gerbe $X_{0,\vec{4},0}$
  over $\PP^3 \times \Mbar_{0,4}$ to a fiber of $\pi_2$ yields the
  non-trivial gerbe $\PP(2,2,2,2)$ over $\PP^3$.  On the other hand,
  restricting the gerbe $X_{0,\vec{4},0}$ over $\PP^3 \times
  \Mbar_{0,4}$ to a fiber of $\pi_1$ yields the trivial gerbe
  $(B\mu_2)_{0,\vec{4},0}$ over $\Mbar_{0,4}$.  Thus the gerbe
  $X_{0,\vec{4},0}$ over $\PP^3 \times \Mbar_{0,4}$ corresponds to the
  class $({-1},1) \in \mu_2 \times \mu_2$.  The Proposition is proved.
\end{proof}

We will show that there is no cohomology class $e$ on
$X_{0,\vec{4},0}$ such that:
\[
\iota_\star \big[Y_{0,\vec{4},0}\big]^\vir = \big[X_{0,\vec{4},0}\big]^\vir \cap
e
\]
As before, this amounts to showing that $\iota_\star
[Y_{0,\vec{4},0}]^\vir$ and $[X_{0,\vec{4},0}]^\vir$ are not scalar
multiples of each other.  Consider the universal families over the
moduli stacks $X_{0,\vec{4},0}$ and $Y_{0,\vec{4},0}$:
\begin{align*}
  & \xymatrix{
    \cC_X \ar[r]^\ev \ar[d]_\pi & X \\
    X_{0,\vec{4},0}
  }
  &
  \xymatrix{
    \cC_Y \ar[r]^\ev \ar[d]_\pi & Y \\
    Y_{0,\vec{4},0}
  }
\end{align*}
The moduli stack $X_{0,\vec{4},0}$ is smooth, with obstruction bundle
$\cV^{\oplus 3}$ where:
\[
\mathcal{V} = R^1\pi_\star (\ev^\star \cO_X(1))
\]
Thus the virtual fundamental class of $X_{0,\vec{4},0}$ is:
\begin{equation}
  \label{eq:VFCX}
  \big[X_{0,\vec{4},0}\big]^\vir = \big[X_{0,\vec{4},0}\big] \cap \be(\cV)^3
\end{equation}
The moduli stack $Y_{0,\vec{4},0}$ is also smooth, with obstruction bundle:
\[
\Big[R^1\pi_\star (\ev^\star \cO_Y(1)) \Big]^{\oplus 2} 
\]
and since the universal family over $Y_{0,\vec{4},0}$ is the
restriction to $Y_{0,\vec{4},0}$ of the universal family over
$X_{0,\vec{4},0}$:
\[
\xymatrix{ 
  & \cC_X \ar[rr]^{\ev} \ar'[d]_>>>>{\pi}[dd] && X \\
  \cC_Y \ar[ur] \ar[rr]_<<<<<<<<{\ev} \ar[dd]_{\pi} && Y \ar[ur]_i \\
  & X_{0,\vec{4},0}\\
  Y_{0,\vec{4},0} \ar[ur]_{\iota}
}
\]
it follows that:
\[
\Big[R^1\pi_\star (\ev^\star \cO_Y(1)) \Big]^{\oplus 2} =
\iota^\star \cV^{\oplus 2}
\]
Thus the virtual fundamental class of $Y_{0,\vec{4},0}$ is:
\begin{equation}
  \label{eq:VFCY}
  \big[Y_{0,\vec{4},0}\big]^\vir = \big[Y_{0,\vec{4},0}\big] \cap \be(\iota^\star \cV)^2
\end{equation}

We next identify the Euler class of $\cV$.  As before, let $\pi_1$ and
$\pi_2$ denote the projections to (respectively) the first and second
factors of the coarse moduli space $|X_{0,\vec{4},0}| = \PP^3 \times
\Mbar_{0,4}$.  Let $h \in H^2(|X_{0,\vec{4},0}|)$ be the pullback
along $\pi_1$ of the first Chern class of the line bundle $\cO(1) \to
\PP^3$.  Let $\psi \in H^2(|X_{0,\vec{4},0}|)$ be the pullback along
$\pi_2$ of the universal cotangent line class on $\Mbar_{0,4}$
corresponding to the first marked point.  Note that $\{h, \psi\}$
forms a basis for $H^2(|X_{0,\vec{4},0}|)$.

\begin{lemma}
  \label{lem:identifyV}
  \[
  \be(\cV) =  \textstyle \frac{1}{2}  (h-\psi)
  \]
\end{lemma}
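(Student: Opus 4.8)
The plan is to show first that $\cV$ is a line bundle, and then to compute its first Chern class (which is its Euler class, since the rank is one) by writing it in the basis $\{h,\psi\}$ of $H^2$ and restricting to the two factors of $\PP^3 \times \Mbar_{0,4}$. For the rank: over a point of $X_{0,\vec 4,0}$ carrying an orbicurve $C$, the fibre of $R^\bullet\pi_\star(\ev^\star\cO_X(1))$ is $H^\bullet(C,\ev^\star\cO_X(1))$. Since $\ev$ is constant on coarse spaces the line bundle $\ev^\star\cO_X(1)$ has orbifold degree $0$, and since $\mu_2$ acts with weight $1$ on the fibre of $\cO_X(1)$ over the $\mu_2$-locus $\PP(2,2,2,2)\subset X$ it has age $\tfrac12$ at each of the four marked points of $C$. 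Orbifold Riemann--Roch then gives $\chi(C,\ev^\star\cO_X(1)) = 0+1-0-4\cdot\tfrac12 = -1$, and $H^0$ vanishes because a global section would descend to a global section of the coarse push-forward, a line bundle of degree $-2$ on $\PP^1$. Hence $R^0\pi_\star(\ev^\star\cO_X(1))=0$ and $\cV=R^1\pi_\star(\ev^\star\cO_X(1))$ is locally free of rank one, so $\be(\cV)=c_1(\cV)=\alpha h+\beta\psi$ for some $\alpha,\beta\in\mathbb{Q}$.

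To find $\beta$, restrict to $\{q\}\times\Mbar_{0,4}$ for a fixed $\mu_2$-point $q\in\PP(2,2,2,2)$, so that $h$ restricts to $0$. Here the universal curve is the universal orbicurve $\mathcal{C}\to\Mbar_{0,4}$ with its four gerby sections $\Sigma_i$, and $\ev^\star\cO_X(1)$ restricts to the order-two line bundle $\mathcal{S}$ that is trivial away from the $\Sigma_i$ and carries the sign character along each. Since $\mathcal{S}\otimes\cO_{\mathcal{C}}(-\Sigma_1-\dots-\Sigma_4)$ has trivial age everywhere, it descends to $\nu^\star M$ for a line bundle $M$ on the coarse universal curve $\cong\Mbar_{0,5}$, and comparing squares (using $\cO_{\mathcal{C}}(\Sigma_i)^{\otimes 2}=\nu^\star\cO(\bar\Sigma_i)$ and $\mathcal{S}^{\otimes 2}\cong\cO_{\mathcal{C}}$) gives $M^{\otimes 2}\cong\cO(-\bar\Sigma_1-\dots-\bar\Sigma_4)$. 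The skyscraper quotient $\cO_{\mathcal{C}}(\Sigma_1+\dots+\Sigma_4)/\cO_{\mathcal{C}}$ carries the nontrivial character along the gerby sections, so its push-forward (and higher push-forward) vanishes; hence $R^1\pi_\star\mathcal{S}\cong R^1\bar\pi_\star M$, which by relative Serre duality is $\big(\bar\pi_\star(M^\vee\otimes\omega_{\bar\pi})\big)^\vee$. Grothendieck--Riemann--Roch for $\bar\pi:\Mbar_{0,5}\to\Mbar_{0,4}$, using $c_1(M)=-\tfrac12\sum[\bar\Sigma_i]$, the self-intersection formulas $\bar\pi_\star[\bar\Sigma_i]^2=-\psi$ and $\bar\pi_\star\!\big(c_1(\omega_{\bar\pi})\cdot[\bar\Sigma_i]\big)=\psi$, and Mumford's relation $\bar\pi_\star\!\big(c_1(\omega_{\bar\pi})^2+[\text{node locus}]\big)=0$ (equivalently $\lambda_1=0$ in genus zero), then yields $c_1\big(\bar\pi_\star(M^\vee\otimes\omega_{\bar\pi})\big)=\tfrac12\psi$, so $\beta=-\tfrac12$.

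To find $\alpha$, restrict to $\PP(2,2,2,2)\times\{\mathrm{pt}\}$ for a general point of $\Mbar_{0,4}$, so that $\psi$ restricts to $0$. Over this slice $(\ev^\star\cO_X(1))^{\otimes 2}=\ev^\star\cO_X(2)$ is pulled back from the base, since $\cO_X(2)|_{\PP(2,2,2,2)}=\nu^\star\cO_{\PP^3}(1)$; thus $\ev^\star\cO_X(1)$ is, up to a bundle pulled back from the fixed orbicurve, the pullback of a square root of $\nu^\star\cO_{\PP^3}(1)$, and the projection formula identifies $\cV$ on this slice with a line bundle whose square is $\nu^\star\cO_{\PP^3}(1)$. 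Hence $c_1(\cV)|_{\mathrm{slice}}=\tfrac12 h$ and $\alpha=\tfrac12$. Combining the two computations, $\be(\cV)=c_1(\cV)=\tfrac12(h-\psi)$.

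The main obstacle is the Grothendieck--Riemann--Roch computation of the second paragraph: one must work on the stacks throughout so that the half-integral class $\tfrac12\psi$ is meaningful, pin down $M$ and its Chern class correctly (in particular that $\cO(-\sum\bar\Sigma_i)$ admits the relevant square root), and verify that the genus-zero correction terms cancel. A second, more bookkeeping, difficulty is setting up the universal curves over the two slices correctly --- in particular the fact that $\ev$ is forced to land in $\PP(2,2,2,2)$ with the isotropy groups at the marked points matched --- after which the remaining input is standard intersection theory on $\Mbar_{0,4}$ and $\PP^3$.
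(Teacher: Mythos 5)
Your proposal reaches the correct answer and, at the top level, follows the same strategy as the paper: since $H^2(|X_{0,\vec{4},0}|)$ is spanned by $h$ and $\psi$, you pin down the two coefficients of $c_1(\cV)$ by restricting to the two factors of $\PP(2,2,2,2)\times\Mbar_{0,4}$. Your treatment of the $h$-coefficient (projection formula, plus the observation that the restriction of $\cV$ must be a square root of the pullback of $\cO_{\PP^3}(1)$ and hence is $\cO_{\PP(2,2,2,2)}(1)$) is essentially the paper's argument; the one point to tighten there is that the factor you call ``a bundle pulled back from the fixed orbicurve'' contributes an $R^1\pi_\star$ whose determinant you must argue is trivial on $\PP(2,2,2,2)$ --- the uniqueness of the square root in $\operatorname{Pic}(\PP(2,2,2,2))\cong\ZZ$, which you invoke, is what closes this. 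Where you genuinely diverge is the $\psi$-coefficient: the paper identifies the restriction of $\cV$ to a fiber $(B\mu_2)_{0,\vec{4},0}$ with the dual Hurwitz--Hodge bundle $\EE^\vee$ and simply quotes the integral $\int_{(B\mu_2)_{0,\vec{4},0}}c_1(\EE)=\tfrac{1}{4}$ from Johnson--Pandharipande--Tseng, whereas you descend the order-two line bundle to the coarse universal curve $\Mbar_{0,5}\to\Mbar_{0,4}$ and compute directly by Grothendieck--Riemann--Roch and relative Serre duality. Your computation checks out: $\bar{\pi}_\star\bigl(\tfrac{1}{2}c_1(M)^2\bigr)=-\tfrac{1}{2}\psi$, $-\bar{\pi}_\star\bigl(\tfrac{1}{2}c_1(M)c_1(\omega_{\bar{\pi}})\bigr)=\psi$, and the genus-zero relation kills the remaining Todd term, giving $c_1(R^1\bar{\pi}_\star M)=-\tfrac{1}{2}\psi$; this amounts to an independent derivation of the Hurwitz--Hodge integral in this special case, buying self-containedness at the cost of the descent bookkeeping. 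Your preliminary verification that $\cV$ is a line bundle, via orbifold Riemann--Roch and the vanishing of $H^0$, is a useful addition that the paper leaves implicit (though strictly one should also check the nodal domains over the boundary of $\Mbar_{0,4}$, where the same count $h^0=0$, $h^1=1$ holds).
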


\begin{proof} 
  Consider the universal family:
  \begin{equation}
    \label{eq:diagram2}
    \xymatrix{
      \cC_X \ar[r]^\ev \ar[d]_\pi & X \\
      X_{0,\vec{4},0}
    }
  \end{equation}
  and recall that $X_{0,\vec{4},0} \cong \PP(2,2,2,2) \times
  \Mbar_{0,4}$.  We have:
  \begin{align*}
    \cV &= {-\pi_\star} \ev^\star \big(\cO_X(1)\big) && \text{(K-theory pushforward)}
  \\
  &= {-\pi_\star} \pi^\star \big(\cO_{\PP(2,2,2,2)}(1)\big) \\
  &= \cO_{\PP(2,2,2,2)}(1) \boxtimes \big({-\pi_\star \cO_{\cC_X}}\big)
  && \text{(projection formula)} 
\end{align*}
We saw in the proof of Proposition~\ref{pro:modulistacks} that
restricting the gerbe $X_{0,\vec{4},0}$ over $\PP^3 \times
\Mbar_{0,4}$ to a fiber of $\pi_2$ yields $\PP(2,2,2,2)$.  The
restriction of $\cV$ to this copy of $\PP(2,2,2,2)$ is
$\cO_{\PP(2,2,2,2)}(1)$, and so:
\[
\be(\cV) = \textstyle \frac{1}{2} h + \alpha \psi
\]
for some scalar $\alpha$.  We saw in the proof of
Proposition~\ref{pro:modulistacks} that restricting the gerbe
$X_{0,\vec{4},0}$ over $\PP^3 \times \Mbar_{0,4}$ to a fiber of
$\pi_1$ yields $(B\mu_2)_{0,\vec{4},0}$.  The restriction of $\cV$ to
this copy of $(B\mu_2)_{0,\vec{4},0}$ is $\EE^\vee$, where $\EE$ is
the Hodge bundle on $(B\mu_2)_{0,\vec{4},0}$, and so we can determine
the scalar $\alpha$ by comparing the integrals:
\begin{align*}
  \int_{(B\mu_2)_{0,\vec{4},0}} c_1\big(\EE\big) = {\frac{1}{4}}
  &&
  \int_{\Mbar_{0,4}} \psi_1 = 1
\end{align*}
The right-hand integral here is well-known; the left-hand integral is
computed in \cite{Johnson--Pandharipande--Tseng}*{\S3.1}.
\end{proof}
 
\begin{proposition}
  The classes $\iota_\star [Y_{0,\vec{4},0}]^\vir$ and $[X_{0,\vec{4},0}]^\vir$ are
  not scalar multiples of each other.
\end{proposition}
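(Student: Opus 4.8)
The plan is to follow the template of the genus-one example in \S2: compute both $\bigl[X_{0,\vec{4},0}\bigr]^\vir$ and $\iota_\star\bigl[Y_{0,\vec{4},0}\bigr]^\vir$ as cohomology classes on $|X_{0,\vec{4},0}| = \PP^3 \times \Mbar_{0,4}$ and check that they are not proportional. Since $\Mbar_{0,4}\cong\PP^1$ we have $\psi^2=0$, so $H^6(|X_{0,\vec{4},0}|)$ has basis $\{h^3, h^2\psi\}$. By Lemma~\ref{lem:identifyV}, $\be(\cV)=\tfrac12(h-\psi)$, so \eqref{eq:VFCX} gives
\[
\bigl[X_{0,\vec{4},0}\bigr]^\vir = \be(\cV)^3 = \tfrac18(h-\psi)^3 = \tfrac18\bigl(h^3 - 3h^2\psi\bigr).
\]

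For $\iota_\star\bigl[Y_{0,\vec{4},0}\bigr]^\vir$ the key inputs are the Euler class $\be(\iota^\star\cV)$ and the pushforward $\iota_\star$. Naturality of the Euler class gives $\be(\iota^\star\cV)=\iota^\star\be(\cV)=\tfrac12\iota^\star(h-\psi)$, so by \eqref{eq:VFCY} the class $\bigl[Y_{0,\vec{4},0}\bigr]^\vir$ is represented by $\tfrac14\iota^\star\bigl((h-\psi)^2\bigr)$; applying $\iota_\star$ and the projection formula $\iota_\star\iota^\star(-)=(-)\cup\iota_\star 1$ exactly as in \S2 yields
\[
\iota_\star\bigl[Y_{0,\vec{4},0}\bigr]^\vir = \tfrac14(h-\psi)^2 \cup \iota_\star 1.
\]
So I need $\iota_\star 1 \in H^2(|X_{0,\vec{4},0}|)$, i.e.\ the divisor class of $Y_{0,\vec{4},0}$ in $X_{0,\vec{4},0}$, and this is the one point that needs care. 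By Proposition~\ref{pro:modulistacks} the morphism $\iota$ is compatible with the product decompositions $X_{0,\vec{4},0}=\PP(2,2,2,2)\times\Mbar_{0,4}$ and $Y_{0,\vec{4},0}=\PP(2,2,2)\times\Mbar_{0,4}$, and on the first factors it is the morphism $\PP(2,2,2)\to\PP(2,2,2,2)$ induced by $i\colon Y\to X$. On the $\mu_2$-locus the section of $\cO(1)$ vanishes identically, being linear in the weight-one coordinates, so this morphism is cut out by the $\cO(2)$-summand alone: $\PP(2,2,2)$ sits in $\PP(2,2,2,2)$ as the zero locus of a section of $\cO_{\PP(2,2,2,2)}(2)$. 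Since $\cO_{\PP(2,2,2,2)}(2)$ descends to $\cO_{\PP^3}(1)$ on the coarse space, the coarse picture is just the inclusion of a hyperplane $\PP^2\hookrightarrow\PP^3$, and hence $\iota_\star 1 = h$.

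Combining the last two displays and using $\psi^2=0$ once more,
\[
\iota_\star\bigl[Y_{0,\vec{4},0}\bigr]^\vir = \tfrac14\,h(h-\psi)^2 = \tfrac14\bigl(h^3 - 2h^2\psi\bigr).
\]
If this were $\lambda\,\bigl[X_{0,\vec{4},0}\bigr]^\vir$ for a scalar $\lambda$, then comparing the coefficients of $h^3$ and of $h^2\psi$ in the basis $\{h^3,h^2\psi\}$ would force $\tfrac14=\tfrac18\lambda$ and $-\tfrac12=-\tfrac38\lambda$, i.e.\ $\lambda=2$ and $\lambda=\tfrac43$ simultaneously --- a contradiction. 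Therefore $\iota_\star\bigl[Y_{0,\vec{4},0}\bigr]^\vir$ and $\bigl[X_{0,\vec{4},0}\bigr]^\vir$ are not scalar multiples of one another.

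The only step I expect to require genuine care is the computation $\iota_\star 1 = h$: one must check that it is really just the degree-two summand of $\cO(1)\oplus\cO(2)$ that cuts out the $\mu_2$-locus, and track how the twisted line bundle $\cO_{\PP(2,2,2,2)}(2)$ descends to the coarse space $\PP^3$ --- equivalently, that the $\mu_2$-gerbe structures are compatible enough that $\iota_\star$ agrees with the hyperplane pushforward computed on coarse spaces. Everything else --- naturality of the Euler class, the projection formula, and the relation $\psi^2=0$ in $H^\star(\Mbar_{0,4})$ --- is routine.
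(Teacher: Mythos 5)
Your proposal is correct and follows essentially the same route as the paper: both compute $[X_{0,\vec{4},0}]^\vir = [X_{0,\vec{4},0}]\cap\bigl(\tfrac18 h^3-\tfrac38 h^2\psi\bigr)$ and $\iota_\star[Y_{0,\vec{4},0}]^\vir = [X_{0,\vec{4},0}]\cap\bigl(\tfrac14 h^3-\tfrac12 h^2\psi\bigr)$ from Lemma~\ref{lem:identifyV}, \eqref{eq:VFCX}, \eqref{eq:VFCY} and the projection formula, and compare coefficients in the basis $\{h^3,h^2\psi\}$. The only difference is that the paper uses $\iota_\star[Y_{0,\vec{4},0}]=[X_{0,\vec{4},0}]\cap h$ without comment, whereas you correctly identify this as the one step needing justification and supply it (the $\cO(1)$-summand vanishes on the $\mu_2$-locus, so $\PP(2,2,2)\subset\PP(2,2,2,2)$ is cut out by the $\cO(2)$-summand, giving a hyperplane on coarse spaces).
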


\begin{proof}
 Combining \eqref{eq:VFCX} and \eqref{eq:VFCY} with
  Lemma~\ref{lem:identifyV}, we have:
  \begin{align*}
    [X_{0,\vec{4},0}]^\vir & =  [X_{0,\vec{4},0}] \cap \Big( \textstyle \frac{1}{8} h^3 - \frac{3}{8}
    h^2 \psi \Big)
    \intertext{and:}
    \iota_\star [Y_{0,\vec{4},0}]^\vir 
    &= \iota_\star [Y_{0,\vec{4},0}] \cap \be(\cV)^2  \\
    &=  [X_{0,\vec{4},0}] \cap \Big( h \cup \textstyle \frac{1}{4}(h-\psi)^2 \Big)
    \\
    &= [X_{0,\vec{4},0}] \cap \Big( \textstyle \frac{1}{4} h^3 - \frac{1}{2}
    h^2 \psi \Big)
  \end{align*}
  Since $h^3$ and $h^2 \psi$ are linearly independent in
  $H^6\big(|X_{0,\vec{4},0}|\big)$, the Proposition follows.
\end{proof}

\section{Convexity}
\label{sec:convexity}

Our examples show that the key property underlying ($\dagger$) is not
positivity ($\ast$) of $E$ but rather convexity of $E$.  Recall that a
vector bundle $E \to X$ is called convex if and only if $H^1(C,f^\star
E) = 0$ for all stable maps $f:C \to E$ from genus-zero (orbi)curves.
Suppose that $E = \oplus_j E_j$ is a direct sum of line bundles and
that each line bundle $E_j$ satisfies ($\ast$).  If $X$ is a smooth
variety then $E$ is automatically convex but, as we will discuss
below, this need not be the case if $X$ is an orbifold.

Let $X$ be a smooth projective variety or smooth orbifold, and let $E
\to X$ be a convex vector bundle.  Let:
\[
\xymatrix{
  C \ar[d]_{\pi} \ar[r]^{\ev} & X \\
  X_{0,n,d}}
\]
be the universal family over the moduli stack $X_{0,n,d}$ of
genus-zero stable maps and let $E_{0,n,d} = R^0 \pi_\star \ev^\star
E$.  Convexity implies that $R^1 \pi_\star \ev^\star E = 0$, and hence
that $E_{0,n,d}$ is a vector bundle on $X_{0,n,d}$. 

\begin{proposition}[Convexity implies ($\dagger$)]
  Let $X$ be a smooth projective variety or orbifold, let $E \to X$ be
  a convex vector bundle, and let $Y$ be the subvariety or
  suborbifold of $X$ cut out by a generic section $s$ of $E$.  Let
  $i:Y \to X$ be the inclusion map, and let $\iota:Y_{0,n,\delta} \to
  X_{0,n,i_\star \delta}$ be the induced morphism of moduli stacks.  Then:
  \[
  \sum_{\delta : i_\star \delta = d}
  \iota_\star [Y_{0,n,\delta}]^\vir = [X_{0,n,d}]^\vir \cap \be(E_{0,n,d})
  \]
\end{proposition}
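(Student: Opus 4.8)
The plan is to exhibit $\coprod_{\delta:\,i_\star\delta=d} Y_{0,n,\delta}$ as the zero locus of a natural section of $E_{0,n,d}$, and then to identify the virtual class of that zero locus with the refined Euler class $\be(E_{0,n,d})\cap[X_{0,n,d}]^\vir$ by comparing obstruction theories. First I would construct the section: the section $s\in H^0(X,E)$ pulls back along $\ev\colon C\to X$ to $\ev^\star s\in H^0(C,\ev^\star E)$, and pushing forward along $\pi$ gives a section $\tilde s:=\pi_\star\ev^\star s$ of $E_{0,n,d}=R^0\pi_\star\ev^\star E$ (a genuine vector bundle, since convexity forces $R^1\pi_\star\ev^\star E=0$, as discussed above). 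Over a point $[f\colon C\to X]$ the section $\tilde s$ takes the value $f^\star s\in H^0(C,f^\star E)$, which vanishes exactly when $f$ factors through $Y=Z(s)$. I would then check, at the level of $T$-points, that the scheme-theoretic zero locus $Z(\tilde s)\subset X_{0,n,d}$ coincides with $\coprod_{\delta:\,i_\star\delta=d}Y_{0,n,\delta}$ and that $\iota$ is the inclusion: a family of stable maps $C_T\to X$ over $T$ with $\tilde s|_T=0$ is the same datum as a family of stable maps to $Y$ (a map to $X$ landing in $Y$ is stable as a map to $X$ if and only if it is stable as a map to $Y$), and the degree decomposes as $d=i_\star\delta$ with $\delta$ locally constant, so the union is genuinely disjoint. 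Genericity of $s$ ensures that $Y$ is smooth and that $0\to TY\to i^\star TX\to i^\star E\to 0$ is exact; this sequence is needed below.

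Second, the class $\be(E_{0,n,d})\cap[X_{0,n,d}]^\vir$ refines canonically to a class $\eta\in A_\ast\big(Z(\tilde s)\big)$, namely the localized top Chern class of the pair $(E_{0,n,d},\tilde s)$ applied to the virtual cycle of $X_{0,n,d}$. Under the closed immersion $\iota$ one has $\iota_\star\eta=[X_{0,n,d}]^\vir\cap\be(E_{0,n,d})$, so it suffices to prove $\eta=\sum_{\delta:\,i_\star\delta=d}[Y_{0,n,\delta}]^\vir$, component by component.

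Third — the technical heart — I would compare perfect obstruction theories. Working relative to the Artin stack $\Mfr$ of prestable $n$-pointed genus-zero curves (twisted curves in the orbifold case), $X_{0,n,d}$ and $Y_{0,n,\delta}$ carry the standard obstruction theories $(R\pi_\star\ev^\star TX)^\vee$ and $(R\pi_\star\ev^\star TY)^\vee$. Pulling the sequence $0\to TY\to i^\star TX\to i^\star E\to 0$ back along $\ev$ and applying $R\pi_\star$, and using convexity to kill $R^1\pi_\star\ev^\star (i^\star E)$ so that $R\pi_\star\ev^\star(i^\star E)=\iota^\star E_{0,n,d}$ lies in degree $0$, yields a distinguished triangle showing that the three obstruction theories form a compatible triple whose relative term is $(\iota^\star E_{0,n,d})^\vee[1]$ — precisely the obstruction theory carried by the zero locus of a section of $E_{0,n,d}$. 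By the functoriality of virtual classes under zero loci of sections (equivalently, the virtual-pullback formalism, or the argument of Kim, Kresch and Pantev), the contribution of the open-and-closed piece $Y_{0,n,\delta}$ to $\eta$ is exactly $[Y_{0,n,\delta}]^\vir$. Summing over all $\delta$ with $i_\star\delta=d$ gives $\eta=\sum_\delta[Y_{0,n,\delta}]^\vir$, and applying $\iota_\star$ proves ($\dagger$).

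I expect the main obstacle to be the bookkeeping in this last step: setting up the compatible triple of obstruction theories with the correct relative term over the appropriate Artin stack of (twisted) prestable curves, and verifying that the orbifold versions of all the inputs — the normal bundle sequence, the $R\pi_\star$ computations, and the vanishing $R^1\pi_\star\ev^\star E=0$ — go through unchanged for Deligne–Mumford target stacks. The geometry (the section and its zero locus) is straightforward; matching the derived-category data of the obstruction theories is where care is needed.
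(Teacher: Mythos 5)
Your proposal is correct and follows essentially the same route as the paper's proof: realizing $\coprod_{\delta}Y_{0,n,\delta}$ as the zero locus of the induced section $\tilde s$ of $E_{0,n,d}$, using convexity to identify the relative term of the compatible triple of obstruction theories over $\Mfr$ with $(\iota^\star E_{0,n,d})^\vee[1]$, and invoking Kim--Kresch--Pantev functoriality to get $0^![X_{0,n,d}]^\vir=\sum_\delta[Y_{0,n,\delta}]^\vir$. The extra detail you supply (the $T$-point identification of $Z(\tilde s)$ and the localized top Chern class refinement) is consistent with, and implicit in, the paper's argument.
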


\begin{proof}
  The stacks $X_{0,n,d}$ and $Y_{0,n,\delta}$ carry perfect obstruction
  theories relative to the Artin stack $\Mfr$ of marked twisted curves
  \cite{AGV}:
  \begin{equation}
    \label{eq:obstructiontheories}
    \begin{aligned}
      &\text{$\big(R^\bullet \pi_\star \ev^\star TX\big)^\vee$ for
        $X_{0,n,d}$} \\
      &\text{$\big(R^\bullet \pi_\star \ev^\star TY\big)^\vee$ for
        $Y_{0,n,\delta}$} 
    \end{aligned}
  \end{equation}
  Write:
  \[
  Y_d = \coprod_{\delta:i_\star \delta=d} Y_{0,n,\delta}
  \]
  and consider the 2-Cartesian digram of Deligne--Mumford stacks:
  \[
  \xymatrix{Y_d \ar[r]^-\iota \ar[d]_\iota & X_{0,n,d} \ar[d]^{\tilde{s}} \\ X_{0,n,d}
    \ar[r]^0 & E_{0,n,d}}
  \]
  where $0$ is the zero section of $E_{0,n,d}$ and $\tilde{s}$ is the
  section of $E_{0,n,d}$ induced by $s$.  For a morphism $A\to B$ of
  stacks, let $L_{A/B}$ denote the relative cotangent complex
  \cite{Laumon--Moret-Bailly}.  There is a morphism of distinguished
  triangles in the derived category of sheaves on $Y_d$:
  \[
  \xymatrix{
    \iota^\star(R\pi_\star\ev^\star T_X)^\vee \ar[r] \ar[d] & (R\pi_\star\ev^\star T_Y)^\vee \ar[r] \ar[d]& \iota^\star E^\vee_{0,n,d}[1] \ar[r] \ar[d]& \iota^\star(R\pi_\star\ev^\star T_X)^\vee[1] \ar[d]\\
    \iota^\star L_{X_{0,n,d}/\Mfr} \ar[r] & L_{Y_d/\Mfr} \ar[r] & L_{Y_d/X_{0,n,d}} \ar[r] &\iota^\star L_{X_{0,n,d}/\Mfr}[1]}
  \]
  and, since $E$ is convex, we have:
  \[
  \iota^\star E^\vee_{0,n,d}[1] = \iota^\star L_{X_{0,n,d}/E_{0,n,d}}
  \]
  Thus the perfect obstruction theories \eqref{eq:obstructiontheories}
  are compatible over $\iota:Y_d \to X_{0,n,d}$ in the sense of
  Behrend--Fantechi
  \cite{Behrend--Fantechi}*{Definition~5.8}. Functoriality for the
  virtual fundamental class \cite{Kim--Kresch--Pantev} now implies
  that:
  \[
  0^![X_{0,n,d}]^\vir=\sum_{\delta:i_\star \delta = d} [Y_{0,n,\delta}]^\vir
  \]
  The Proposition follows.
\end{proof}

\begin{remark}
  In the non-convex case, much of this goes through but the perfect
  obstruction theories involved are no longer compatible along
  $\iota$.
\end{remark}

\begin{remark}
  Suppose now that $X$ is a smooth orbifold and that $E \to X$ is a
  line bundle on $X$ that satisfies ($\ast$).  A straightforward
  argument involving orbifold Riemann--Roch \cite{AGV}*{\S7} shows
  that $E$ is convex if and only if $E$ is the pullback of a line
  bundle on the coarse moduli space of $X$.
\end{remark}

\section{Conclusion}
\label{sec:conclusion}

We have seen that the Quantum Lefschetz Hyperplane Principle can fail
for orbifold complete intersections, in cases where the bundle
defining the complete intersection is non-convex.  Thus at the moment
we lack tools to prove mirror theorems for such complete
intersections, even when the ambient orbifold is toric.  A positivity
condition alone ($\ast$) is not enough to force convexity: it is
necessary also for the bundle involved to be the pullback of a bundle
on the coarse moduli space.  This latter condition is very
restrictive, and so ``most'' bundles on orbifolds are not convex.

Despite the examples in this paper one may still hope that, under some
mild conditions, genus-zero Gromov--Witten invariants of orbifold
complete intersections coincide with appropriate twisted
Gromov--Witten invariants.  For example, the equivariant-Euler twisted
I-function $I^{\mathrm{tw}}(t,z)$ in \cite{CCIT}*{Theorem 4.8} admits
a non-equivariant limit when the bundle $E$ and the parameter $t$
involved satisfy certain mild conditions \cite{CCIT}*{Corollary 5.1}.
This is surprising, because the conditions there do not imply
convexity. So one can hope that the twisted I-function still
calculates the genuine invariants in such cases.  (In the examples in
this paper, the relevant twisted I-function does not admit a
non-equivariant limit.)  For example, Guest--Sakai computed the small
quantum cohomology of a degree 3 hypersurface in P(1,1,1,2) from the
differential equation satisfied by the twisted I-function
\cite{Guest--Sakai}, showing that the result coincides with Corti's
geometric calculation.  

Establishing the relationship between Gromov--Witten invariants of
orbifold complete intersections and twisted Gromov--Witten invariants
will require new methods.  In the case of positive, non-convex bundles
on orbifolds, the geometry involved is very similar to that which
occurs when studying higher-genus stable maps to hypersurfaces in
smooth varieties.  Zinger and his coauthors
\citelist{\cite{Li--Zinger} \cite{Vakil--Zinger}} and Chang--Li
\cite{Chang--Li} have made significant progress in this area recently,
and it will be interesting to see if their techniques shed light on
the genus-zero orbifold case too.

\bibliographystyle{amsplain}
\begin{bibdiv}
  \begin{biblist}
    
    \bib{AGV}{article}{
      author={Abramovich, Dan},
      author={Graber, Tom},
      author={Vistoli, Angelo},
      title={Gromov-Witten theory of Deligne-Mumford stacks},
      journal={Amer. J. Math.},
      volume={130},
      date={2008},
      number={5},
      pages={1337--1398},
      issn={0002-9327},
      review={\MR{2450211 (2009k:14108)}},
      doi={10.1353/ajm.0.0017},
    }

    \bib{Behrend--Fantechi}{article}{
      author={Behrend, K.},
      author={Fantechi, B.},
      title={The intrinsic normal cone},
      journal={Invent. Math.},
      volume={128},
      date={1997},
      number={1},
      pages={45--88},
      issn={0020-9910},
      review={\MR{1437495 (98e:14022)}},
      doi={10.1007/s002220050136},
    }

    \bib{Chang--Li}{article}{
      author = {Chang, H.-L.},
      author = {Li, J.},
      title = {Gromov-Witten invariants of stable maps with fields},
      eprint = {arXiv:1101.0914 [math.AG]},
      year = {2011},
    }

    \bib{CCIT}{article}{
      author={Coates, Tom},
      author={Corti, Alessio},
      author={Iritani, Hiroshi},
      author={Tseng, Hsian-Hua},
      title={Computing genus-zero twisted Gromov-Witten invariants},
      journal={Duke Math. J.},
      volume={147},
      date={2009},
      number={3},
      pages={377--438},
      issn={0012-7094},
      review={\MR{2510741 (2010a:14090)}},
      doi={10.1215/00127094-2009-015},
    }

    \bib{Cox--Katz--Lee}{article}{
      author={Cox, David A.},
      author={Katz, Sheldon},
      author={Lee, Yuan-Pin},
      title={Virtual fundamental classes of zero loci},
      conference={
        title={Advances in algebraic geometry motivated by physics (Lowell,
          MA, 2000)},
      },
      book={
        series={Contemp. Math.},
        volume={276},
        publisher={Amer. Math. Soc.},
        place={Providence, RI},
      },
      date={2001},
      pages={157--166},
      review={\MR{1837116 (2003a:14081)}},
    }
    
    \bib{Givental:equivariant}{article}{
      author={Givental, Alexander B.},
      title={Equivariant Gromov-Witten invariants},
      journal={Internat. Math. Res. Notices},
      date={1996},
      number={13},
      pages={613--663},
      issn={1073-7928},
      review={\MR{1408320 (97e:14015)}},
      doi={10.1155/S1073792896000414},
    }
    
    \bib{Givental:toric}{article}{
      author={Givental, Alexander},
      title={A mirror theorem for toric complete intersections},
      conference={
        title={Topological field theory, primitive forms and related topics
          (Kyoto, 1996)},
      },
      book={
        series={Progr. Math.},
        volume={160},
        publisher={Birkh\"auser Boston},
        place={Boston, MA},
      },
      date={1998},
      pages={141--175},
      review={\MR{1653024 (2000a:14063)}},
    }

    \bib{Givental:elliptic}{article}{
      author={Givental, Alexander},
      title={Elliptic Gromov-Witten invariants and the generalized mirror
        conjecture},
      conference={
        title={Integrable systems and algebraic geometry},
        address={Kobe/Kyoto},
        date={1997},
      },
      book={
        publisher={World Sci. Publ., River Edge, NJ},
      },
      date={1998},
      pages={107--155},
      review={\MR{1672116 (2000b:14074)}},
    }

    \bib{Guest--Sakai}{article}{
      author = {Guest, M.},
      author = {Sakai, H.},
      title = {Orbifold quantum D-modules associated to weighted projective spaces},
      eprint = {arXiv:0810.4236 [math.AG]},
      year = {2008},
    }

    \bib{Kontsevich--Manin}{article}{
      author={Kontsevich, M.},
      author={Manin, Yu.},
      title={Gromov-Witten classes, quantum cohomology, and enumerative
        geometry},
      journal={Comm. Math. Phys.},
      volume={164},
      date={1994},
      number={3},
      pages={525--562},
      issn={0010-3616},
      review={\MR{1291244 (95i:14049)}},
    }
    
    \bib{Li--Tian}{article}{
      author={Li, Jun},
      author={Tian, Gang},
      title={Virtual moduli cycles and Gromov-Witten invariants of algebraic
        varieties},
      journal={J. Amer. Math. Soc.},
      volume={11},
      date={1998},
      number={1},
      pages={119--174},
      issn={0894-0347},
      review={\MR{1467172 (99d:14011)}},
      doi={10.1090/S0894-0347-98-00250-1},
    }

    \bib{LLY:1}{article}{
      author={Lian, Bong H.},
      author={Liu, Kefeng},
      author={Yau, Shing-Tung},
      title={Mirror principle. I},
      journal={Asian J. Math.},
      volume={1},
      date={1997},
      number={4},
      pages={729--763},
      issn={1093-6106},
      review={\MR{1621573 (99e:14062)}},
    }

    \bib{LLY:2}{article}{
      author={Lian, Bong H.},
      author={Liu, Kefeng},
      author={Yau, Shing-Tung},
      title={Mirror principle. II},
      note={Sir Michael Atiyah: a great mathematician of the twentieth
        century},
      journal={Asian J. Math.},
      volume={3},
      date={1999},
      number={1},
      pages={109--146},
      issn={1093-6106},
      review={\MR{1701925 (2001a:14057)}},
    }

    \bib{LLY:3}{article}{
      author={Lian, Bong H.},
      author={Liu, Kefeng},
      author={Yau, Shing-Tung},
      title={Mirror principle. III},
      journal={Asian J. Math.},
      volume={3},
      date={1999},
      number={4},
      pages={771--800},
      issn={1093-6106},
      review={\MR{1797578 (2002g:14080)}},
    }

    \bib{Johnson--Pandharipande--Tseng}{article}{
      author={Johnson, P.},
      author={Pandharipande, R.},
      author={Tseng, H.-H.},
      title={Abelian Hurwitz-Hodge integrals},
      journal={Michigan Math. J.},
      volume={60},
      date={2011},
      number={1},
      pages={171--198},
      issn={0026-2285},
      review={\MR{2785870}},
      doi={10.1307/mmj/1301586310},
    }

    \bib{Kim--Kresch--Pantev}{article}{
      author={Kim, Bumsig},
      author={Kresch, Andrew},
      author={Pantev, Tony},
      title={Functoriality in intersection theory and a conjecture of Cox,
        Katz, and Lee},
      journal={J. Pure Appl. Algebra},
      volume={179},
      date={2003},
      number={1-2},
      pages={127--136},
      issn={0022-4049},
      review={\MR{1958379 (2003m:14088)}},
      doi={10.1016/S0022-4049(02)00293-1},
    }

    \bib{Laumon--Moret-Bailly}{book}{
      author={Laumon, G{\'e}rard},
      author={Moret-Bailly, Laurent},
      title={Champs alg\'ebriques},
      language={French},
      series={Ergebnisse der Mathematik und ihrer Grenzgebiete. 3. Folge. A
        Series of Modern Surveys in Mathematics [Results in Mathematics and
        Related Areas. 3rd Series. A Series of Modern Surveys in Mathematics]},
      volume={39},
      publisher={Springer-Verlag},
      place={Berlin},
      date={2000},
      pages={xii+208},
      isbn={3-540-65761-4},
      review={\MR{1771927 (2001f:14006)}},
    }
    
    \bib{Li--Zinger}{article}{
      author={Li, Jun},
      author={Zinger, Aleksey},
      title={On the genus-one Gromov-Witten invariants of complete
        intersections},
      journal={J. Differential Geom.},
      volume={82},
      date={2009},
      number={3},
      pages={641--690},
      issn={0022-040X},
      review={\MR{2534991 (2011e:14103)}},
    }
    
    \bib{Moerdijk}{article}{
      author={Moerdijk, Ieke},
      title={Introduction to the language of stacks and gerbes},
      eprint={arXiv:math/0212266v1 [math.AT]},
      year={2002},
    }

    \bib{Ross--Thomas}{article}{
      author={Ross, Julius},
      author={Thomas, Richard},
      title={Weighted projective embeddings, stability of orbifolds and constant scalar curvature K\"ahler metrics},
      journal={J. Differential Geom.},
      volume={88},
      date={2011},
      number={1},
      pages={109--159},
   }

   \bib{Vakil--Zinger}{article}{
     author={Vakil, Ravi},
     author={Zinger, Aleksey},
     title={A desingularization of the main component of the moduli space of
       genus-one stable maps into $\Bbb P\sp n$},
     journal={Geom. Topol.},
     volume={12},
     date={2008},
     number={1},
     pages={1--95},
     issn={1465-3060},
     review={\MR{2377245 (2009b:14023)}},
     doi={10.2140/gt.2008.12.1},
   }

  \end{biblist}
\end{bibdiv}
\end{document}